\documentclass[11pt, a4paper]{amsart}
\usepackage{amssymb} 
\usepackage{mathabx} 
\usepackage{epic}  
\usepackage[vcentermath]{youngtab} 
\usepackage[backref=page]{hyperref} 
\usepackage{amsthm}
\usepackage[capitalise]{cleveref} 

\makeatletter

\def\myMRbibitem{\@ifnextchar[\my@lbibitem\my@bibitem}

\def\mybiblabel#1#2{\@biblabel{{\hyperref{http://www.ams.org/mathscinet-getitem?mr=#1}{}{}{#2}}}}

\def\myhyperanchor#1{\Hy@raisedlink{\hyper@anchorstart{cite.#1}\hyper@anchorend}}

\def\my@lbibitem[#1]#2#3#4\par{%
    \item[\mybiblabel{#2}{#1}\myhyperanchor{#3}\hfill]#4%
    \@ifundefined{ifbackrefparscan}{}{\BR@backref{#3}}%
    \if@filesw{\let\protect\noexpand\immediate
       \write\@auxout{\string\bibcite{#3}{#1}}}\fi\ignorespaces%
}

\def\my@bibitem#1#2#3\par{%
    \refstepcounter\@listctr
    \item[\mybiblabel{#1}{\the\value\@listctr}\myhyperanchor{#2}\hfill]#3%
    \@ifundefined{ifbackrefparscan}{}{\BR@backref{#2}}%
    \if@filesw\immediate\write\@auxout
        {\string\bibcite{#2}{\the\value\@listctr}}\fi\ignorespaces%
}

\makeatother

\newtheoremstyle{note}
  {5pt}
  {5pt}
  {\small}
  {}
  {\bfseries}
  {.}
  {.5em}
  {}


\theoremstyle{plain}
    \newtheorem{thm}{Theorem}
    \newtheorem{prop}{Proposition}[section]

    \newtheorem{lemma}[prop]{Lemma}

\theoremstyle{definition}

\theoremstyle{remark}

\theoremstyle{note}
    
	\newtheorem{example}[prop]{Example}

\crefname{thm}{Theorem}{Theorems} 
\Crefname{thm}{Theorem}{Theorems} 
\crefname{otherthm}{Theorem}{Theorems}  
\Crefname{otherthm}{Theorem}{Theorems} 
\crefname{prop}{Proposition}{Propositions} 
\Crefname{prop}{Proposition}{Propositions}

\numberwithin{equation}{section}         

\setcounter{secnumdepth}{3}              

\setcounter{tocdepth}{1}                
\hypersetup{bookmarksdepth=subsection} 


\renewcommand*{\backref}[1]{}
\renewcommand*{\backrefalt}[4]{\quad \tiny 
    \ifcase #1 (Not cited.)%
    \or        (Cited on page~#2.)%
    \else      (Cited on pages~#2.)%
    \fi}

\newcommand{\arxiv}[1]{Preprint \href{http://arxiv.org/abs/#1}{arXiv:{#1}}}

\crefname{section}{\S}{\S\S} 
\Crefname{section}{\S}{\S\S} 
\crefname{subsection}{\S}{\S\S} 
\Crefname{subsection}{\S}{\S\S}


\newcommand{\comment}[1]{}

\newcommand{\C}{\mathbb{C}}

\newcommand{\Z}{\mathbb{Z}}

\renewcommand{\setminus}{\smallsetminus}

\newcommand{\Mat}{\mathrm{Mat}}

\DeclareMathOperator{\codim}{codim}

\DeclareMathOperator{\rank}{rank}
\DeclareMathOperator{\col}{col}
\newcommand{\cupro}{\smallsmile}
\newcommand{\brickpattern}{\drawline(.25,0)(.25,.25)\drawline(.75,.25)(.75,.5)\drawline(.25,.5)(.25,.75)\drawline(.75,.75)(.75,1)\put(0,0){\grid(1,1)(1,.25)}}
\newcommand{\diagonalpattern}{\drawline(.75,0)(1,.25)\drawline(.5,0)(1,.5)\drawline(.25,0)(1,.75)\drawline(0,0)(1,1)\drawline(0,.25)(.75,1)\drawline(0,.5)(.5,1)\drawline(0,.75)(.25,1)}
\newcommand{\lightdiagonalpattern}{\drawline(.5,0)(1,.5)\drawline(0,0)(1,1)\drawline(0,.5)(.5,1)}


\title{Note on the dimension of certain algebraic sets of matrices}
\author[Bochi]{Jairo Bochi}
\address{Pontif\'icia Universidade Cat\'olica do Rio de Janeiro (PUC--Rio)}
\urladdr{www.mat.puc-rio.br/$\sim$jairo}
\email{jairo@mat.puc-rio.br}
\author[Gourmelon]{Nicolas Gourmelon}
\address{Institut de Math\'{e}matiques de Bordeaux, Universit\'{e} Bordeaux 1}
\email{nikolaz.gourmelon@gmail.com}
\date{\today}

\begin{document}

\maketitle

\section{Preamble}

In this short note we prove a lemma about the dimension of certain algebraic sets of matrices.
This result is needed in our paper \cite{BG_control}.
The result presented here has also applications in other situations
and so it should appear as part of a larger work \cite{BG_transitive}.

\section{Statement of the result}\label{s.statement}

%

If $A \in \Mat_{n \times m}(\C)$, let $\col A \subset \C^n$ denote the column space of $A$.
A set $X \subset \Mat_{n \times m}(\C)$ is called \emph{column-invariant}
if 
$$
\left.
\begin{array}{c}
A \in X \\ 
B \in \Mat_{n \times m}(\C)\\
\col A = \col B
\end{array}
\right\} \ \Rightarrow \ 
B \in X.
$$
So a column-invariant set $X$ is characterized by its set of column spaces.
We enlarge the latter set by including also subspaces, thus defining:
\begin{equation}\label{e.bracket_notation}
\ldbrack X \rdbrack := \big\{ E \text{ subspace of } \C^n ; \; E \subset \col A \text{ for some } A \in X \big\}.
\end{equation}
Then we have:

\begin{thm}\label{t.main}
Let $X \subset \Mat_{n \times m}(\C)$ be a nonempty algebraically closed, column-invariant set.
Suppose $E$ is a vector subspace of $\C^n$
that does not belong to $\ldbrack X \rdbrack$.
Then
$$
\codim X \ge m + 1 - \dim E \, .
$$
\end{thm}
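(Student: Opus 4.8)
The plan is to prove the equivalent upper bound $\dim X \le (n-1)m + d - 1$, where $d = \dim E$, by induction on the integer $n - \dim E \ge 0$. For the base case $n = d$ (so $E = \C^n$), the hypothesis $E \notin \ldbrack X \rdbrack$ says precisely that no $A \in X$ has $\col A = \C^n$, i.e.\ every $A \in X$ satisfies $\rank A \le n-1$. Hence $X$ is contained in the determinantal variety $\{A : \rank A \le n-1\}$, whose codimension is the classical value $m-n+1 = m+1-d$ when $m \ge n$ (and when $m < n$ the asserted bound is $\le 0$, so there is nothing to prove). Thus $\codim X \ge m+1-d$ in the base case.

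For the inductive step I assume $d \le n-1$, so I may choose a hyperplane $W$ with $E \subseteq W \subsetneq \C^n$ and $\dim W = n-1$. Consider the linear subspace $L := \{A \in \Mat_{n\times m}(\C) : \col A \subseteq W\}$; writing $\pi\colon \C^n \to \C^n/W$ for the projection, $L$ is the kernel of the surjection $A \mapsto \pi\circ A$, so $\codim L = m$, and I identify $L \cong \Mat_{(n-1)\times m}(\C)$ by fixing coordinates on $W$. I claim $X^W := X \cap L$ again satisfies the hypotheses of the theorem inside $\Mat_{(n-1)\times m}(\C)$, with the same $E$ now regarded as a $d$-dimensional subspace of $W$. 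Indeed $X^W$ is closed; it is column-invariant because $\col B = \col A \subseteq W$ with $A \in X^W$ forces $B \in X$ (column-invariance of $X$) and $B \in L$; and since $X^W \subseteq X$ we have $\ldbrack X^W \rdbrack \subseteq \ldbrack X \rdbrack$, so $E \notin \ldbrack X^W \rdbrack$. The induction hypothesis (the quantity $n-d$ has dropped by one) then gives $\dim X^W \le (n-2)m + d - 1$.

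To conclude I combine this with the fact that cutting by the $m$ linear equations defining $L$ lowers dimension by at most $m$. Because $X$ is column-invariant it is invariant under nonzero rescalings, and being closed it is a cone, so each irreducible component contains the origin; choosing a top-dimensional component $X_0$ we have $0 \in X_0 \cap L$, whence $\dim(X_0 \cap L) \ge \dim X_0 - m = \dim X - m$. Since $X_0 \cap L \subseteq X^W$, this yields $\dim X \le \dim X^W + m \le (n-1)m + d - 1$, i.e.\ $\codim X \ge m+1-d$, completing the induction.

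The conceptual heart of the argument, and the step I expect to require the most care, is the choice of the slice $L = \{A : \col A \subseteq W\}$ with $W \supseteq E$: this is exactly the reduction that simultaneously \emph{(i)} preserves column-invariance, \emph{(ii)} preserves the avoidance $E \notin \ldbrack \cdot \rdbrack$, and \emph{(iii)} costs only $m$ in dimension, which is precisely the per-step budget matching the determinantal base case. The two technical points I would verify with care are that every top-dimensional component of $X$ passes through $0$ (so the intersection with $L$ is nonempty and the bound on the dimension drop applies), and that the identification $L \cong \Mat_{(n-1)\times m}(\C)$ faithfully transports $\col$, the bracket $\ldbrack \cdot \rdbrack$, and column-invariance, so that the induction hypothesis genuinely applies to $X^W$.
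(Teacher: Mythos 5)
Your proof is correct, and it is genuinely different from the paper's. The paper stratifies $X$ by rank, projects each stratum $X \cap \hat{R}_k$ to the Grassmannian (\cref{l.projection}), and proves the key estimate \cref{t.schubert} by Poincar\'e duality together with the Young-diagram overlap criterion (\cref{l.overlap}); you never leave matrix space. Your induction on $n-\dim E$ instead slices $X$ by the codimension-$m$ linear space $L$ of matrices with columns in a hyperplane $W \supseteq E$, and its base case is exactly the paper's particular case from \cref{s.particular}. The two points you flag for verification do hold: every irreducible component $X_0$ of a nonempty closed column-invariant set is a cone (the closure of the image of $\C\times X_0 \to \Mat_{n\times m}(\C)$, $(t,A)\mapsto tA$, is irreducible, contains $X_0$, and lies in $X$, hence equals $X_0$ and contains $0$), so $X_0\cap L\ne\emptyset$ and Krull's Hauptidealsatz, applied to the $m$ linear equations cutting out $L$ one at a time while tracking a component through $0$, gives $\dim(X_0\cap L)\ge \dim X_0-m$; and any linear identification $L\cong \Mat_{(n-1)\times m}(\C)$ transports $\col$, column-invariance, and the bracket $\ldbrack \cdot \rdbrack$ faithfully, so the inductive hypothesis legitimately applies to $X^W$, which is nonempty since $0\in X^W$. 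It is worth noting that the paper remarks after \cref{s.particular} that it does not seem likely that \cref{t.main} can be reduced to \eqref{e.cod_R_k}; your argument shows that such a reduction is in fact possible, using \eqref{e.cod_R_k} only for $k=n-1$. As for what each approach buys: the Schubert-calculus route yields finer intersection-theoretic information (which classes $\sigma_\mu$ can occur in $[Y]^*$, positivity, and avoidance of general Schubert varieties), whereas yours is elementary and self-contained --- no cohomology, fundamental classes, or transversality --- it avoids the compactness/constructibility argument of \cref{l.projection}, and it works verbatim over any algebraically closed field, where the paper's proof as written (which uses the classical topology and complex orientations) would have to be recast in terms of Chow rings.
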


It is obvious that the algebraicity hypothesis is indispensable. 


\Cref{t.main} follows without difficulty from intersection theory of the grassmannians
(``Schubert calculus'').
We tried to make the exposition the least technical as possible, to make it accessible to non-experts (like ourselves).

\section{A particular case}\label{s.particular}

Define
\begin{equation}\label{e.R_k}
R_k := \big \{ A \in \Mat_{n \times m}(\C) ; \; \rank A \le k \big\} \, .
\end{equation}
We recall (see \cite[Prop.~12.2]{Harris}) that 
this is an irreducible algebraically closed set of codimension
\begin{equation}\label{e.cod_R_k}
\codim R_k = (m-k)(n-k) \qquad \text{if } 0 \le k \le \min(m,n).
\end{equation}

\begin{proof}[Proof of \cref{t.main} in the case $E = \C^n$]
If $E = \C^n$ then the hypothesis $\C^n \not\in \ldbrack X \rdbrack$
means that $X \subset R_{n-1}$.
We can assume that $n-1 \le m$, otherwise the conclusion of the 
\lcnamecref{t.main} is vacuous.
Thus $\codim X \ge \codim R_{n-1} = m + 1 - n$, as we wanted to show.
\end{proof}

It does not seem likely that the general \cref{t.main}
can be reduced to \eqref{e.cod_R_k}.

\section{Reduction to a property of grassmannians} \label{s.reduction}

We will show that to prove \cref{t.main} 
it is sufficient to prove a dimension estimate (\cref{t.schubert} below) 
for certain subvarieties of a grassmaniann.

\subsection{Grassmannians}

Given integers $n > k \ge 1$, the \emph{grassmanniann} $G_k(\C^n)$ 
is the set of the vector subspaces of $\C^{n}$ of 
dimension $k$.

The grassmannian can be interpreted a subvariety of a higher dimensional complex projective space
as follows.
The \emph{Pl\"ucker embedding} is the map $G_k(\C^n) \to P(\bigwedge^k C^n)$
defined as follows: for each $V \in G_k(\C^n)$, take a basis $\{v_1, \dots, v_k\}$
and map $V$ to $[v_1 \wedge \cdots v_k]$.
This is clearly an one-to-one map.
It can be shown (see e.g.\cite[p.~61ff]{Harris}) that the image is 
an algebraically closed subset 
of $P(\bigwedge^k C^n)$.
Its dimension is 
\begin{equation}\label{e.dim_G}
\dim G_k(\C^n) = k(n-k).
\end{equation}

If $E \subset \C^n$ is a vector space with $\dim E = e \le k$ then 
we consider the following subset of $G_k(\C^n)$: 
\begin{equation}\label{e.special schubert}
S_k(E) := \big\{V \in G_k(\C^n) ; \; V \supset E \big\}.
\end{equation}
(This is a Schubert variety of a special type, as we will see later.)
Since any $V \in S_k(E)$ can be written as $E \oplus W$ for some $V \subset W^\perp$,
we see that $S_k(E)$ is homeomorphic to $G_{k-e}(\C^{n-e})$.

We will 
show that an algebraic set that avoids $S_k(E)$ cannot be too large:

\begin{thm}\label{t.schubert}
Fix integers $1 \le e \le k < n$. 
Suppose that $Y$ is an algebraically closed subset of $G_k(\C^n)$
that is disjoint from $S_k(E)$,
for some $e$-dimensional subspace $E \subset \C^n$.
Then $\codim Y \ge k + 1 - e$.
\end{thm}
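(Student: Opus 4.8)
The plan is to induct on $e$, peeling off one dimension of $E$ at a time by linear projection. Throughout write $G = G_k(\C^n)$, so $\dim G = k(n-k)$ by \eqref{e.dim_G}, and let $\codim_G$ denote codimension in $G$. The one genuinely geometric input I would use is the classical fact that a complete subvariety of an affine space is finite; everything else is bookkeeping with dimensions.

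First I would dispose of the degenerate case in which $V \cap E = 0$ for \emph{every} $V \in Y$. This case is automatic when $e=1$, since there $V \cap E \ne 0$ would force $E \subset V$, i.e. $V \in S_k(E)$; so it also serves as the base of the induction. Here necessarily $k \le n-e$ (otherwise every $k$-plane meets $E$ and the case is vacuous, or $Y = \emptyset$). The projection $q \colon \C^n \to \C^n/E$ then restricts to an isomorphism on each $V \in Y$, so $V \mapsto q(V)$ is a morphism $\phi \colon Y \to G_k(\C^{n-e})$. The fibre of $\phi$ over a point $\bar V$ consists of the $k$-planes in $q^{-1}(\bar V)$ (a space of dimension $k+e$) complementary to $E$, which form an affine space of dimension $ke$. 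Since $Y$ is algebraically closed in the projective variety $G$, it is complete, so each fibre of $\phi|_Y$ is a complete subvariety of an affine space, hence finite. Thus $\dim Y \le \dim G_k(\C^{n-e}) = k(n-e-k)$, giving $\codim_G Y \ge ke \ge k+1-e$, the last inequality because $(k+1)(e-1) \ge 0$.

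For the inductive step I would assume $e \ge 2$ and, by the case just settled, that some $V_0 \in Y$ has $V_0 \cap E \ne 0$. Choose a line $\ell \subset V_0 \cap E$ and restrict to the sub-Grassmannian $S := S_k(\ell) = \{V : \ell \subset V\}$, which is smooth and, as noted just after \eqref{e.special schubert} (the case $e=1$ of that remark), isomorphic to $G_{k-1}(\C^n/\ell)$ via $V \mapsto V/\ell$. Put $Y' := Y \cap S$ and $\bar E := E/\ell$, with $\dim \bar E = e-1$. For $V \supset \ell$ one has $E \subset V$ if and only if $\bar E \subset V/\ell$, so under the isomorphism $Y'$ becomes an algebraically closed subset of $G_{k-1}(\C^{n-1})$ disjoint from $S_{k-1}(\bar E)$. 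The inductive hypothesis, with parameters $n-1,\,k-1,\,e-1$, then yields $\codim_S Y' \ge (k-1)+1-(e-1) = k+1-e$.

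It remains to compare $\codim_S Y'$ with $\codim_G Y$. Since $V_0 \in Y'$ the intersection is nonempty, and as $S$ is a smooth subvariety of the smooth variety $G$, the standard dimension bound gives, for the irreducible component $Z$ of $Y \cap S$ through $V_0$, that $\dim Z \ge \dim Y + \dim S - \dim G$, i.e. $\codim_S Z \le \codim_G Y$. Combining with the previous paragraph, $k+1-e \le \codim_S Z \le \codim_G Y$, which closes the induction. The main obstacle, and the reason for the two-case structure, is guaranteeing that this reduction is nonempty: if no $V \in Y$ met any line of $E$ one could not start the reduction, and that is exactly the situation $V \cap E = 0$ for all $V$, which is why it must be isolated and handled by the completeness argument rather than by the inductive projection.
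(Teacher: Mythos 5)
Your argument is essentially correct, but it takes a genuinely different route from the paper. The paper proves \cref{t.schubert} cohomologically: it expands the Poincar\'e dual $[Y]^*$ in the Schubert basis as $\sum n_i \sigma_{\mu_i}$ with $n_i > 0$, uses disjointness to get $\sigma_\lambda \cupro \sigma_{\mu_i} = 0$ for the special diagram $\lambda$ of \eqref{e.special young}, and then applies the non-overlap criterion (\cref{l.overlap}) to force $|\mu_i| \ge k+1-e$. You avoid all of that machinery (fundamental classes of singular varieties, positivity of the $n_i$, the duality formula --- facts the paper cites rather than proves) and instead run an induction on $e$ using only variety-level dimension theory: your base/degenerate case, where $V \cap E = 0$ for all $V \in Y$, correctly gets the stronger bound $\codim Y \ge ke$ from the finiteness of fibres of $V \mapsto q(V)$ (completeness of $Y$ against the affine space of complements of $E$ in $q^{-1}(\bar V)$, of dimension $ke$), and $ke \ge k+1-e$ is exactly $(k+1)(e-1) \ge 0$; your inductive step correctly identifies $Y \cap S_k(\ell)$ with a closed subset of $G_{k-1}(\C^{n-1})$ disjoint from $S_{k-1}(E/\ell)$, with the parameters $(n-1,k-1,e-1)$ staying in range since $2 \le e \le k$. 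What each approach buys: the paper's argument compresses the geometry into one combinatorial lemma and exhibits the extremal class $\mu = (1,\dots,1,0,\dots,0)$, showing exactly where the bound is tight; yours is self-contained at the level of standard dimension theory and yields the sharper estimate $ke$ whenever no element of $Y$ meets $E$.

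One point needs patching. In the inductive step you apply the bound $\dim Z \ge \dim Y + \dim S - \dim G$ to the component $Z$ of $Y \cap S$ through $V_0$. That inequality, as an instance of the intersection-dimension theorem in the smooth ambient variety $G_k(\C^n)$, holds with $\dim Y$ replaced by the dimension of the \emph{irreducible component of $Y$ containing $V_0$}; if $Y$ is reducible and $V_0$ happens to lie only on a low-dimensional component, your chain of inequalities bounds the codimension of that component, not $\codim Y$. The fix is the same one-liner the paper uses for its $Y$: reduce at the outset to irreducible $Y$, which is harmless since each irreducible component is algebraically closed, disjoint from $S_k(E)$, and $\codim Y$ is the minimum of the codimensions of the components; your dichotomy (all of $Y$ avoids $E$, or some $V_0 \in Y$ meets it) should then be applied to each component separately. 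With that adjustment the proof is complete.
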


\subsection{Proof of \cref{t.main} assuming \cref{t.schubert}} \label{ss.reduction}

Assuming \cref{t.schubert} for the while, let us see how it yields \cref{t.main}.

Recalling notation \eqref{e.R_k}, define the quasiprojective variety
$$
\hat{R}_k := R_k \setminus R_{k-1} \, .
$$
We define a map $\pi_k \colon \hat{R}_k \to G_k(\C^n)$
by $A \mapsto \col A$.

\begin{lemma}\label{l.projection}
If $X$ is an algebraically closed column-invariant subset of $\hat{R}_k$
then $Y = \pi_k(X)$ is algebraically closed subset of $G_k(\C^n)$,
and the codimension of $Y$ inside $G_k(\C^n)$ is the same as the codimension of $X$ inside $\hat{R}_k$.
\end{lemma}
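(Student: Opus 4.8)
The plan is to realize $\pi_k$ as a Zariski-locally-trivial fiber bundle and then read off both conclusions from the bundle structure. The starting observation is that a matrix $A \in \hat{R}_k$ is the same datum as its ordered list of columns $(a_1, \dots, a_m)$, subject to the constraint that these vectors span the $k$-dimensional space $V = \col A$. Thus $A \mapsto (V, a_1, \dots, a_m)$ identifies $\hat{R}_k$ with the open subvariety
\begin{equation*}
\big\{ (V, a_1, \dots, a_m) ; \; V \in G_k(\C^n), \ a_i \in V, \ \mathrm{span}(a_1, \dots, a_m) = V \big\}
\end{equation*}
of the total space of $\gamma^{\oplus m}$, where $\gamma$ denotes the tautological rank-$k$ bundle over $G_k(\C^n)$ (whose fiber over $V$ is $V$ itself). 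Under this identification $\pi_k$ becomes the bundle projection $(V, a_1, \dots, a_m) \mapsto V$. Since $\gamma$, and hence $\gamma^{\oplus m}$, is Zariski-locally trivial, and the spanning condition is an open condition imposed fiberwise, the map $\pi_k$ is a Zariski-locally-trivial fibration whose fiber $F$ is the variety of full-row-rank $k \times m$ matrices; this $F$ is irreducible of dimension $km$.

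Granting this, the role of column-invariance is exactly that $X$ is a union of fibers of $\pi_k$: indeed $A \in X$ together with $\col B = \col A$ forces $B \in X$, so $X = \pi_k^{-1}(Y)$ with $Y = \pi_k(X)$. To see that $Y$ is algebraically closed I would argue locally on the base. Over a trivializing chart $U \subset G_k(\C^n)$ one has $\pi_k^{-1}(U) \cong U \times F$, under which the saturated set $X$ corresponds to $(Y \cap U) \times F$. Since $X$ is closed, $(Y \cap U) \times F$ is closed in $U \times F$; because $F$ is nonempty and irreducible, $\overline{Y \cap U} \times F = \overline{(Y \cap U) \times F} = (Y \cap U) \times F$, whence $Y \cap U$ is closed in $U$. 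As closedness is local on $G_k(\C^n)$, the set $Y$ is algebraically closed.

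The codimension equality follows from the same local picture by counting dimensions. Over each chart, $X \cap \pi_k^{-1}(U) \cong (Y \cap U) \times F$ has dimension $\dim(Y \cap U) + km$, so $\dim X = \dim Y + km$; applying this to the extreme case $X = \hat{R}_k$ (i.e. $Y = G_k(\C^n)$) gives $\dim \hat{R}_k = \dim G_k(\C^n) + km$. Subtracting,
\begin{equation*}
\codim X = \dim \hat{R}_k - \dim X = \dim G_k(\C^n) - \dim Y = \codim Y,
\end{equation*}
the two codimensions being taken inside $\hat{R}_k$ and $G_k(\C^n)$ respectively.

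The only genuine work is the first paragraph: producing the local trivializations that make $\pi_k$ a Zariski-locally-trivial bundle. Concretely, over the standard affine chart $U_W = \{V ; \, V \cap W = 0\}$ attached to a complementary $(n-k)$-plane $W$, one writes each $V \in U_W$ as the graph of a linear map $V_0 \to W$ and thereby trivializes $\gamma|_{U_W}$; the spanning condition then cuts out $U_W \times F$. Once this is in hand, the closedness of $Y$ and the dimension bookkeeping are purely formal — the main point being that the fiber $F$ is irreducible of constant dimension $km$, so that the fiber-dimension count is uniform across all components. I therefore expect the local triviality of $\pi_k$ to be the crux, and everything downstream to be routine.
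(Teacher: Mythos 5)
Your proof is correct, but it takes a genuinely different route from the paper's. The paper treats $\pi_k$ merely as a regular map: it first proves $Y$ is closed in the classical topology by a compactness trick (every column space is attained by a matrix with orthonormal columns and zero padding, so $Y$ is the continuous image of a compact set), then upgrades this to Zariski closedness via Chevalley's constructibility theorem and the fact that for irreducible $Z$ the classical closure of $Z \setminus W$ equals $Z$; the codimension equality is then obtained from the fiber-dimension theorem \cite[Thrm.~11.12]{Harris} (after reducing to $X$ irreducible) together with the determinantal codimension formula \eqref{e.cod_R_k}. You instead exhibit $\pi_k$ as a Zariski-locally-trivial bundle --- identifying $\hat{R}_k$ with the open ``spanning'' locus in the $m$-fold sum of the tautological bundle, with fiber $F$ the full-row-rank $k \times m$ matrices --- after which both conclusions are formal: a saturated closed set is locally $(Y \cap U) \times F$, and closedness of $Y \cap U$ follows (your closure identity $\overline{(Y\cap U) \times F} = \overline{Y \cap U} \times F$ is valid even though the Zariski topology on a product is not the product topology; a slightly cleaner route is to pull back the closed set $(Y\cap U)\times F$ along a section $V \mapsto (V, f_0)$, which needs only $F \neq \emptyset$, not irreducibility). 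Your dimension bookkeeping applied to the extreme case $X = \hat{R}_k$ even recovers $\dim \hat{R}_k - \dim G_k(\C^n) = km$ without citing \eqref{e.cod_R_k}. What your approach buys is self-containedness: no classical topology, no constructibility theorem, no fiber-dimension theorem, and no reduction to irreducible components, since the product structure makes the fiber count uniform. What it costs is the one step you only sketch --- verifying that over each chart $U_W$ the map $(V, M) \mapsto A$ and its inverse are both regular, so that the trivialization is an isomorphism of varieties; this is standard (the inverse chart coordinate is computed from a locally invertible $k \times k$ minor, regular on an open cover) but it is the crux, and it should be written out at roughly the level of detail the paper spends on its citations to \cite{Harris} and \cite{Mumford}.
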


\begin{proof}
First, let us see that $\pi_k \colon \hat{R}_k \to G_k(\C^n)$ is a regular map.
We identify $G_k(\C^n)$ with the image of the Pl\"ucker embedding.
In a Zariski neighborhood of each  matrix $A \in \hat{R}_k$, 
the map $\pi_k$ can be defined as $A \mapsto [a_{j_1} \wedge \dots \wedge a_{j_k}]$
for some $j_1 < \dots < j_k$, where $a_j$ is the $j^\text{th}$ column of $A$.
This shows regularity.
	
Next, let us see that $Y = \pi_k (X)$ is closed with respect to the classical (not Zariski)
topology.
Consider the subset $K$ of $X$ formed by the matrices $A \in \hat{R}_k$ 
whose first $k$ columns form an orthonormal set, and whose $m-k$ remaining columns are zero.
Then $K$ is compact (in the classical sense), and thus so is $\pi_k(K)$.
But column-invariance of $X$ implies that $\pi_k(K) = Y$, so $Y$ is closed (in the classical sense).

It follows (see e.g.~\cite[p.39]{Harris}) 
from regularity of $\pi_k$ is regular that the set $Y$ is constructible, i.e., 
it can be written as
$$
Y = \bigcup_{i=1}^{p} Z_i \setminus W_i \, ,
$$
where $Z_i \varsupsetneq W_i$ are algebraically closed subsets of $G_k(\C^n)$.
We can assume that each $Z_i$ is irreducible.
It follows from \cite[Thrm.~2.33]{Mumford} that $\overline{Z_i \setminus W_i} = Z_i$,
where the bar denotes closure in the classical sense.
In particular, $Y = \overline{Y} = \bigcup_{i=1}^{p} Z_i$,
showing that $Y$ is algebraically closed.

We are left to show the equality between codimensions.
Since the codimension of an algebraically closed set equals the minimum of the codimensions of its components, we can assume that $X$ is irreducible.

By column-invariance of $X$,
for each $y\in Y$, the whole fiber $\pi^{-1}(y)$ is contained in $X$.
All those fibers have the same dimension $\mu = km$.
By \cite[Thrm.~11.12]{Harris}, $\dim X = \dim Y + km$.
By \eqref{e.cod_R_k} and \eqref{e.dim_G}, we have $\dim \hat{R}_k - \dim G_k = km$,
so the claim about codimensions follows.
\end{proof}

\begin{proof}[Proof of \cref{t.main}]
Let $X \subset \Mat_{n \times m}(\C)$ be a nonempty algebraically closed, column-invariant set.
Suppose $E$ is a vector subspace of $\C^n$ that does not belong to $\ldbrack X \rdbrack$.
Let $e = \dim E$.
We can assume $e > 0$ (otherwise the result is vacuously true), 
and $e<n$ (because the $e=n$ case was already considered in \cref{s.particular}).

Notice that $X \subset R_{n-1}$.
Let 
$$
X_k := X \cap \hat{R}_k \quad \text{and} \quad 
Y_k := \pi_k(X_k) , \quad \text{for } 0 \le k \le \min(m,n-1).
$$
For every $k$ with $e \le k < n$, the set $Y_k$ is disjoint from 
the set $S_k(E)$ defined by \eqref{e.special schubert}.
In view of \cref{l.projection} and \cref{t.schubert}, we have
$$
\codim_{\hat{R}_k} X_k =  \codim Y_k \ge k + 1 - e \, .
$$
So the codimension of $X_k$ as a subset of $\Mat_{n\times m}(\C)$ is
\begin{align*}
\codim X_k &=   \codim \hat{R}_k + \codim_{\hat{R}_k} X_k \\
           &\ge (m-k)(n-k) + k + 1 - e =: f(k) \, .
\end{align*}
One checks that the function $f(k)$ is decreasing 
on the interval $0 \le k \le \min(m,n-1)$.
Therefore:
\begin{multline*}
\codim X 
=   \min_{0 \le k \le \min(m,n-1)} \codim X_k  
\ge \min_{0 \le k \le \min(m,n-1)} f(k) \\
= f(\min(m,n-1)) 
= m + 1 - e,
\end{multline*}
as claimed.
This proves \cref{t.main} modulo \cref{t.schubert}.
\end{proof}

The proof of \cref{t.schubert} will be given in \cref{s.end},
after we explain the necessary tools in \cref{s.schubert,s.intersection}.

\section{Schubert calculus} \label{s.schubert}

Here we will outline some facts about the intersection of Schubert varieties.
The readable expositions \cite{Blasiak,Vakil} contain more information.

\medskip

A (complete) flag 
in $\C^{n}$ is a sequence of subspaces $F_0 \subset F_1 \subset \cdots \subset F_{n}$
with $\dim F_j = j$. We denote $F_\bullet = \{F_i\}$.

Given $V \in G_k (\C^n)$, 
its \emph{rank table} (with respect to the flag $F_\bullet$)
is the data $\dim (V \cap F_j)$, $j=0,\dots,n$.
The \emph{jumping numbers} are
the indexes $j \in \{1,\dots,n\}$ such that 
$\dim (V \cap F_j) - \dim (V \cap F_{j-1})$ is positive (and thus equal to $1$).
Of course, if one knows the jumping numbers, one know the rank table and vice-versa.
Let us define a third way to encode this information:
Consider a rectangle of height $m$ and width $n-m$, divided in $1 \times 1$ squares.
We form a path of square edges:
Start in the northeast corner of the rectangle.
In the $j^\text{th}$ step ($1 \le j \le n$),
if $j$ is a jumping number then we move one unit in the south direction,
otherwise we move one unit in the west direction.
Since there are exactly $k$ jumping numbers, 
the path ends at the southwest corner of the rectangle.
The \emph{Young diagram} of $V$ with respect to the flag $F_\bullet$ is 
the set of squares in the rectangle that lie northwest of the path.
We denote a Young diagram by $\lambda = (\lambda_1, \lambda_2, \dots, \lambda_k)$,
where $\lambda_i$ is the number of squares in the $i^\text{th}$ row (from north to south).
Its \emph{area} $\lambda_1+\cdots+\lambda_k$ is denoted by $|\lambda|$.

\begin{example}\label{ex.Young}
Here is a possible rank table with $k=5$, $n=12$;
the jumping numbers are underlined:
\begin{center}
\begin{tabular}{rrrrrrrrrrrrrr}
$j = $                 & 0 & 1 & 2 & \underline{3} & 4 & 5 & \underline{6} & 7 & \underline{8} & \underline{9} & 10 & \underline{11} & 12 \\
$\dim (W \cap F_j)=$ & 0 & 0 & 0 &       1 & 1 & 1 &       2 & 2 &        3 &      4 &  4 &        5 &  5 \\
\end{tabular}
\end{center}
The associated path in the rectangle is:
\setlength{\unitlength}{.4cm}
\begin{center}
\raisebox{-3\unitlength}{
\begin{picture}(7,5)
\thinlines
\put(0,0){\grid(7,5)(1,1)}
\thicklines
\put(7,5){\vector(-1,0){1}} 
\put(6,5){\vector(-1,0){1}} 
\put(5,5){\vector(0,-1){1}} 
\put(5,4){\vector(-1,0){1}} 
\put(4,4){\vector(-1,0){1}} 
\put(3,4){\vector(0,-1){1}} 
\put(3,3){\vector(-1,0){1}} 
\put(2,3){\vector(0,-1){1}} 
\put(2,2){\vector(0,-1){1}} 
\put(2,1){\vector(-1,0){1}} 
\put(1,1){\vector(0,-1){1}} 
\put(1,0){\vector(-1,0){1}} 
\end{picture}
}
\end{center}
and so the Young diagram is $$\lambda=\tiny{\yng(5,3,2,2,1)} = (5,3,2,2,1).$$
\end{example}

In general, we have:
\begin{itemize}
\item $\lambda = (\lambda_1, \dots, \lambda_k)$ is a possible Young diagram if and only if
$n-k \ge \lambda_1 \ge \dots \ge \lambda_k \ge 0$.
\item If $j_1 < \dots < j_k$ are the jumping numbers then $\lambda_i = n-k-j_i+i$.
\end{itemize}

The set of $V \in G_k(\C^n)$ that have a given Young diagram 
$\lambda$ 
is called a 
\emph{Schubert cell}, denoted by $\Omega(\lambda)$ or $\Omega(\lambda,F_\bullet)$.
Each Schubert cell is a topological disk of real codimension $2|\lambda|$.
The Schubert cells (for a fixed flag) give a CW decomposition of the space $G_k(\C^n)$.
The closure of $\Omega(\lambda)$ (in either classical or Zariski topologies) is
the set of $V \in G_k(\C^n)$ such that $\dim (V \cap F_{j_i}) \ge i$ for each $i=1,\ldots,n$
(where $j_1 < \dots < j_k$ are the jumping numbers associated to $\lambda$).
These sets are closed irreducible varieties,
called \emph{Schubert varieties}. (See e.g.~\cite[\S9.4]{Fulton}.)

\begin{example}\label{ex.special schubert}
If $E \subset \C^n$ is a subspace with $\dim E = e \le k$ then 
the set $S_k(E)$ defined by \eqref{e.special schubert} is 
a Schubert variety $\bar\Omega(\lambda,F_\bullet)$,
where $F_\bullet$ is any flag with $F_e = E$ and
\setlength{\unitlength}{.25cm}
\begin{equation}\label{e.special young}
\lambda = \big( \underbrace{n-k,\dots,n-k}_{e \text{ times}}, \underbrace{0,\dots,0}_{k-e \text{ times}} \big) = 
\raisebox{-4\unitlength}{
\begin{picture}(12,8)
\thinlines
\put(0,0){\grid(12,8)(1,1)}
\multiput(0,5)(0,1){3}{\multiput(0,0)(1,0){12}{\lightdiagonalpattern}}
\end{picture}
}
\end{equation}
\end{example}

Let $A^*(k,n)$ denote the set of formal linear combinations with integer coefficients of
Young diagrams in the $k \times (n-k)$ rectangle.
This is by definition an abelian group.

\begin{prop}
There is a second $\cupro$ called the \emph{cup product} 
that makes $A^*(k,n)$ a commutative ring, and 
is characterized by the following propertis:

If $\lambda$ and $\mu$ are Young diagrams with respective areas $r$ and $s$
then their cup product is of the form:
$$
\lambda \cupro \mu = \nu_1 + \cdots + \nu_N \, .
$$
where $\nu_1$, \dots, $\nu_N$ are Young diagrams with area $r+s$
(possibly with repetitions, possibly $N=0$).
Moreover, there are flags $F_\bullet$, $G_\bullet$, $H^{(i)}_\bullet$ such that
the manifolds $\bar\Omega(\lambda,F_\bullet)$ and $\bar\Omega(\mu,G_\bullet)$ are transverse
and their intersection is $\bigcup \bar\Omega(\nu_i,H^{(i)}_\bullet)$. 
\end{prop}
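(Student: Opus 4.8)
The plan is to realize $A^*(k,n)$ as the integral cohomology ring of the Grassmannian and transport the multiplication back. First I would identify $A^*(k,n)$ \emph{additively} with $H^*(G_k(\C^n);\Z)$ by sending a Young diagram $\lambda$ to the \emph{Schubert class} $\sigma_\lambda := [\bar\Omega(\lambda,F_\bullet)]$ Poincar\'e-dual to the corresponding Schubert variety. The key point is already recorded above: the Schubert cells $\Omega(\lambda)$ give a CW decomposition of $G_k(\C^n)$ whose cells all lie in even real dimension $2|\lambda|$. Hence the cellular cochain complex has vanishing differentials, so $H^*(G_k(\C^n);\Z)$ is free abelian with the classes $\sigma_\lambda$ as a $\Z$-basis, and $\sigma_\lambda$ sits in degree $2|\lambda|$. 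Since $\GL_n(\C)$ is connected and acts transitively on flags, $\sigma_\lambda$ is independent of the chosen flag $F_\bullet$, so $\lambda \mapsto \sigma_\lambda$ is a canonical additive isomorphism.

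Transporting the cup product of $H^*(G_k(\C^n);\Z)$ through this isomorphism defines $\cupro$ on $A^*(k,n)$ and makes it a commutative (indeed associative, unital) ring: cohomology is graded-commutative, and as every $\sigma_\lambda$ lives in even degree there are no sign issues, the unit being $\sigma_\emptyset$. The grading statement then follows by dimension counting: if $|\lambda|=r$ and $|\mu|=s$, then $\sigma_\lambda \cupro \sigma_\mu \in H^{2(r+s)}(G_k(\C^n);\Z)$, so expanding it in the Schubert basis as $\sum_\nu c^\nu_{\lambda\mu}\,\sigma_\nu$ forces $|\nu|=r+s$ for every contributing $\nu$. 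Because the values of $\cupro$ on the basis of Young diagrams are thereby determined by the (geometrically meaningful) intersection numbers, its bilinear extension is unique, which is the asserted characterization.

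The geometric ``moreover'' clause is where the work lies, and I expect it to be the main obstacle. Here I would invoke Kleiman's transversality theorem: since $\GL_n(\C)$ acts transitively on $G_k(\C^n)$, for generic $g \in \GL_n(\C)$ the Schubert varieties $\bar\Omega(\lambda,F_\bullet)$ and $g\cdot\bar\Omega(\mu,G_\bullet)=\bar\Omega(\mu,g\,G_\bullet)$ meet \emph{transversally}, and by Poincar\'e duality between cup product and the intersection of cycles the class of their reduced intersection equals $\sigma_\lambda\cupro\sigma_\mu$. This already places the two Schubert varieties in transverse position. The delicate step is to show that this transverse intersection is not merely a cycle with class $\sum_\nu c^\nu_{\lambda\mu}\sigma_\nu$, but is literally a union of translated Schubert varieties $\bigcup_i \bar\Omega(\nu_i,H^{(i)}_\bullet)$, with one component for each $\nu$ counted $c^\nu_{\lambda\mu}$ times; in particular this forces the structure constants to be nonnegative integers, matching the form $\lambda\cupro\mu=\nu_1+\cdots+\nu_N$. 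To settle it I would degenerate one flag toward a coordinate flag compatible with the other, so that the intersection limits to a reduced union of Schubert varieties of the expected complex dimension $k(n-k)-r-s$ while the cohomology class is preserved; each irreducible component, being irreducible of that dimension, then carries the class of a single $\sigma_\nu$, and identifying the limiting components recovers the Littlewood--Richardson description. As reproducing that combinatorial degeneration in full is precisely the technical core best left outlined, I would cite \cite{Fulton,Blasiak} for its details.
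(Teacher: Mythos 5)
The paper never actually proves this proposition: it is stated as a known fact of Schubert calculus, with the reader pointed to \cite{Fulton}, \cite{Vakil} and \cite{Blasiak}, and \cref{s.intersection} only sketches the cohomological dictionary that makes it plausible. Your outline reconstructs exactly that dictionary, and the routine parts are correct: the Schubert cells lie in even real dimensions, so the cellular differentials vanish and $H^*(G_k(\C^n);\Z)$ is free with basis $\{\sigma_\lambda\}$; connectedness of $\GL_n(\C)$ gives flag-independence of $\sigma_\lambda$; transporting the cup product gives a commutative ring; and degree counting forces $|\nu|=r+s$ in any expansion of $\sigma_\lambda \cupro \sigma_\mu$. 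Up to this point you are on the same road the paper travels in \cref{s.intersection}.

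The gap is in your last paragraph, and it is a real one. Kleiman transversality applied to generic translates produces the intersection $\bar\Omega(\lambda,F_\bullet)\cap\bar\Omega(\mu,g\,G_\bullet)$, which is a \emph{Richardson variety}: when nonempty it is irreducible, and its class is the full sum $\sum_\nu c^\nu_{\lambda\mu}\sigma_\nu$, not a single Schubert class. This already shows that your inference ``each irreducible component, being irreducible of that dimension, then carries the class of a single $\sigma_\nu$'' is false as a general principle: irreducibility together with the expected dimension does not pin down the cohomology class, and in fact the generic transverse position can never exhibit the intersection as a union of Schubert varieties when more than one $c^\nu_{\lambda\mu}$ is nonzero. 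The content of the ``moreover'' clause is precisely that one can degenerate to \emph{special} flags for which the intersection becomes a generically reduced union of genuine Schubert varieties, each $\nu$ occurring with multiplicity $c^\nu_{\lambda\mu}$, while transversality survives in the algebraic sense defined in \cref{s.intersection}; establishing that each limit component is literally a Schubert variety appearing with multiplicity one is the theorem of \cite{Vakil} (the geometric Littlewood--Richardson rule), not a dimension count. Since you explicitly defer the degeneration to the literature, your proposal ends up where the paper itself does --- quoting the hard fact --- but the bridging sentence should be deleted and replaced by a citation of \cite{Vakil}, which is also the more accurate reference for this step than \cite{Fulton,Blasiak}.
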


\begin{example}
Working in $A^*(2,4)$,
let us compute the products of the Young diagrams 
$\lambda = {\tiny \yng(2)}$ and $\mu={\tiny \yng(1,1)}$.
Fix a flag $F_\bullet$.
Then
$\bar\Omega(\lambda, F_\bullet)$ is the set of $W \in G_2(\C^4)$ that contain $F_1$,
and $\bar\Omega(\mu, F_\bullet)$ is the set of $W \in G_2(\C^4)$ that are contained in $F_3$.
Take another flag $G_\bullet$ which is in general position with respect to $F_\bullet$,
that is $F_i \cap G_{4-i} = \{0\}$.
Then:
\begin{itemize}
\item The set
$\bar\Omega(\lambda, F_\bullet) \cap \bar\Omega(\lambda, G_\bullet)$ 
contains a single element, namely $F_1 \oplus G_1$,
and thus equals $\bar\Omega((2,2),H_\bullet) = \{H_2\}$ for an appropriate flag $H_\bullet$.
This shows that
$\lambda \cupro \lambda = {\tiny \yng(2,2)}$.

\item The space $F_3 \cap G_3$ is $2$-dimensional and thus
is the single element of 
$\bar\Omega(\mu, F_\bullet) \cap \bar\Omega(\mu, G_\bullet)$.
So
$\mu \cupro \mu = {\tiny \yng(2,2)}$.

\item The set 
$\bar\Omega(\lambda, F_\bullet) \cap \bar\Omega(\mu, G_\bullet)$
is empty, thus $\lambda \cupro \mu = 0$.
\end{itemize}
However, if we work in $A^*(4,8)$  
then it can be shown that:
$$
{\tiny \yng(2)}   \cupro {\tiny \yng(2)}   = {\tiny \yng(2,2)} + {\tiny \yng(4)} + {\tiny \yng(3,1)}, \quad
{\tiny \yng(1,1)} \cupro {\tiny \yng(1,1)} = {\tiny \yng(2,2)} + {\tiny \yng(2,1,1)} + {\tiny \yng(1,1,1,1)}, \quad
{\tiny \yng(2)}   \cupro {\tiny \yng(1,1)} = {\tiny \yng(3,1)} + {\tiny \yng(2,1,1)}.
$$
If we drop the terms that do not fit in a $2 \times 2$ rectangle, we reobtain the results for $G_2(\C^4)$. 
\end{example}

The general computation of the product $\lambda \cupro \mu$ is 
not simple and can be done in various ways -- see e.g.\  \cite{Vakil, Fulton}.\footnote{Here is an online calculator: \href{http://young.sp2mi.univ-poitiers.fr/cgi-bin/form-prep/marc/LiE_form.act?action=LRR}{young.sp2mi.univ-poitiers.fr/ cgi-bin/ form-prep/ marc/ LiE{\_}form.act?action=LRR}}
For our purposes, however, it will be sufficient to know when 
the product is zero or not.
The answer is provided by the following simple \lcnamecref{l.overlap}\footnote{In \cite{Vakil} condition~\ref{i.nonoverlap} of the \lcnamecref{l.overlap} is expressed as ``the white checkers are happy''.}:

\begin{lemma}[\cite{Fulton}, p.~148--149]\label{l.overlap}
Let $\lambda$ and $\mu$ be Young diagrams in the $k \times (n-k)$ rectangle.
The following two conditions are equivalent:
\begin{enumerate}

\item\label{i.nonzero} 
$\lambda \cupro \mu \neq 0$.

\item\label{i.nonoverlap} 
If one draws inside the $k \times (n-k)$ rectangle
the Young diagrams of $\lambda$ and $\mu$,
being the later rotated by $180^{\circ}$ and put in the southeast corner,
then the two figures do not overlap
(see \cref{f.nooverlap}). 
Equivalently, $\lambda_i + \mu_{k+1-i} \le n-k$ for every $i=1, \ldots, n$.
\end{enumerate}
\end{lemma}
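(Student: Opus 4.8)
The plan is to convert the algebraic statement about the cup product into a geometric statement about nonemptiness of an intersection of Schubert varieties, and then to settle that nonemptiness by an explicit construction in one direction and a dimension count in the other. By the preceding proposition, $\lambda \cupro \mu = \nu_1 + \cdots + \nu_N$ with $N \ge 0$, and there are flags for which $\bar\Omega(\lambda,F_\bullet)$ and $\bar\Omega(\mu,G_\bullet)$ are transverse with intersection $\bigcup_i \bar\Omega(\nu_i,H^{(i)}_\bullet)$; hence condition \ref{i.nonzero} ($N\ge 1$) holds if and only if this transverse intersection is nonempty. Since the class of $\bar\Omega(\lambda,F_\bullet)$ is independent of $F_\bullet$, and since any two \emph{opposite} flags are carried onto one another by $\GL_n(\C)$ and intersect properly (the standard transversality of opposite Schubert varieties, see \cite{Fulton}), it suffices to decide nonemptiness for the standard coordinate flags $F_j = \langle e_1,\dots,e_j\rangle$ and $G_j = \langle e_{n-j+1},\dots,e_n\rangle$. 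Writing $a_i = n-k+i-\lambda_i$ and $b_i = n-k+i-\mu_i$ for the jumping numbers, $V \in \bar\Omega(\lambda,F_\bullet)$ means $\dim(V\cap F_{a_i})\ge i$ for all $i$, and $V \in \bar\Omega(\mu,G_\bullet)$ means $\dim(V\cap G_{b_i})\ge i$ for all $i$. I expect this reduction to be the main obstacle: everything hinges on knowing that opposite coordinate flags really do compute the product (proper intersection, class equal to $\lambda\cupro\mu$); granting this, both directions are short.

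For \ref{i.nonoverlap} $\Rightarrow$ \ref{i.nonzero} I would exhibit an explicit torus-fixed point. Take the coordinate subspace $V = \langle e_{s_1},\dots,e_{s_k}\rangle$ with $s_i := \mu_{k+1-i} + i$; these indices are strictly increasing and lie in $\{1,\dots,n\}$. Since $\dim(V\cap F_j) = \#\{l : s_l \le j\}$, the $F$-condition $s_i \le a_i$ reads $\lambda_i + \mu_{k+1-i} \le n-k$, which is exactly \ref{i.nonoverlap}; and since $\dim(V\cap G_j) = \#\{l : s_l \ge n-j+1\}$, the $G$-condition reduces to $s_{k+1-l} \ge k-l+\mu_l+1$, which holds with equality by the choice of the $s_i$. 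Thus $V$ lies in the intersection, so it is nonempty and $\lambda\cupro\mu \neq 0$.

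For the contrapositive of the reverse implication I would argue by a dimension count. Suppose $\lambda_i + \mu_{k+1-i} \ge n-k+1$ for some $i$, and set $a = a_i = n-k+i-\lambda_i$ and $b = b_{k+1-i} = n+1-i-\mu_{k+1-i}$. Then $a+b \le n$, so for the opposite coordinate flags the index sets of $F_a = \langle e_1,\dots,e_a\rangle$ and $G_b = \langle e_{n-b+1},\dots,e_n\rangle$ are disjoint, whence $F_a \cap G_b = \{0\}$. If some $V$ lay in both Schubert varieties, then $V\cap F_a$ and $V\cap G_b$ would be subspaces of $V$ meeting only in $\{0\}$, giving $\dim V \ge \dim(V\cap F_a) + \dim(V\cap G_b) \ge i + (k+1-i) = k+1$, contradicting $\dim V = k$. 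Hence the intersection is empty and $\lambda\cupro\mu = 0$, completing the equivalence.
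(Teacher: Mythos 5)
Your proof is correct, but note that the paper itself contains no proof of \cref{l.overlap}: the lemma is quoted from \cite{Fulton}, pp.~148--149, so there is no internal argument to compare against --- and your reconstruction is, in substance, exactly the cited one. Fulton's proof is precisely your pair of arguments for opposite flags: the explicit coordinate subspace $V=\langle e_{s_1},\dots,e_{s_k}\rangle$ with $s_i=\mu_{k+1-i}+i$ (your $F$-conditions $s_i\le a_i$ do reduce to $\lambda_i+\mu_{k+1-i}\le n-k$, and the $G$-conditions hold with equality, as you say), and the dimension count $\dim V\ge i+(k+1-i)=k+1$ from $a_i+b_{k+1-i}=2n-k+1-(\lambda_i+\mu_{k+1-i})\le n$, which forces $F_{a_i}\cap G_{b_{k+1-i}}=\{0\}$; both computations check out against the paper's jumping-number convention $a_i=n-k+i-\lambda_i$. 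The one load-bearing input you lean on without proof --- and you flag this honestly --- is that the opposite coordinate flags ``compute'' the product: this needs Kleiman-type genericity together with the fact that any pair of transverse flags is $\GL_n(\C)$-conjugate to the standard opposite pair, and, for your forward direction, also the positivity observation that a nonempty proper intersection has nonzero class (a nonempty projective subvariety has positive degree, so its class is a nonzero nonnegative combination of Schubert classes). Since the paper cites the same source for the whole lemma, deferring these standard facts to \cite{Fulton} is fair. It is worth noting that your contrapositive direction does not actually need any of this machinery: disjointness of the two Schubert varieties for a \emph{single} choice of flags already forces $\lambda\cupro\mu=0$, because the classes are flag-independent and an empty intersection is (vacuously) transverse --- this is verbatim the Poincar\'e-duality fact recorded in \cref{s.intersection} and used in the paper's proof of \cref{t.schubert}, so that half of your argument sits entirely within the paper's own toolkit.
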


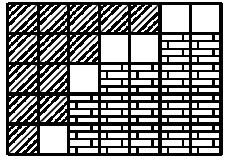
\begin{figure}[hbt]  
\setlength{\unitlength}{.4cm}
\begin{picture}(7,5)

\thicklines
\put(0,0){\grid(7,5)(1,1)}

\thinlines
\put(0,0){\diagonalpattern}
\put(0,1){\diagonalpattern}
\put(1,1){\diagonalpattern}
\put(0,2){\diagonalpattern}
\put(1,2){\diagonalpattern}
\put(0,3){\diagonalpattern}
\put(1,3){\diagonalpattern}
\put(2,3){\diagonalpattern}
\put(0,4){\diagonalpattern}
\put(1,4){\diagonalpattern}
\put(2,4){\diagonalpattern}
\put(3,4){\diagonalpattern}
\put(4,4){\diagonalpattern}

\thinlines
\put(2,0){\brickpattern}
\put(3,0){\brickpattern}
\put(4,0){\brickpattern}
\put(5,0){\brickpattern}
\put(6,0){\brickpattern}
\put(2,1){\brickpattern}
\put(3,1){\brickpattern}
\put(4,1){\brickpattern}
\put(5,1){\brickpattern}
\put(6,1){\brickpattern}
\put(3,2){\brickpattern}
\put(4,2){\brickpattern}
\put(5,2){\brickpattern}
\put(6,2){\brickpattern}
\put(5,3){\brickpattern}
\put(6,3){\brickpattern}

\end{picture}
\caption{{\footnotesize 
Consider $k=5$, $n=12$, $\lambda=(5,3,2,2,1)$, and $\mu = (5,5,4,2,0)$.
The picture shows that the non-overlap condition~(\ref{i.nonoverlap}) from \cref{l.overlap}
is satisfied, and in particular $\lambda \cupro \mu \neq 0$. (This example is reproduced from 
\cite[p.~150]{Fulton}.)}}
\label{f.nooverlap}
\end{figure}


\section{Intersection of subvarieties of the grassmannian}\label{s.intersection}

Next we explain how the Schubert calculus sketched above can be used to obtain information
about intersection of general subvarieties of the Grassmannian,
by means of cohomology and Poincar\'{e} duality.
Our primary source is \cite[Appendix~B]{Fulton};
also, \cite{Hutchings} is a very readable account about the geometric interpretation of the cup product
in cohomology.

\medskip

Any topological space $X$ has singular homology groups $H_i X$ and cohomology groups $H^i X$ (here taken always with integer coefficients). With the cup product $H^i X \times H^j X \to H^{i+j} X$,
the cohomology $H^* X = \bigoplus H^i X$ has a ring structure.

If $X$ is a real compact oriented manifold of dimension $d$ then 
the homology group $H_d X$ is canonically isomorphic to $\Z$, with a generator $[X]$
called the \emph{fundamental class} of $X.$
In addition, there is \emph{Poincar\'{e} duality isomorphism}
$H^i X \to H_{d-i} X$, which is given by 
$\alpha \mapsto \alpha \smallfrown [X]$
(taking the cap product with the fundamental class).
Let us denote by $\omega \mapsto \omega^*$ the inverse isomorphism.

Next suppose $Y$ and $Z$ are compact oriented submanifolds of $X$, of codimensions $i$ and $j$ respectively.
Also suppose that $Y$ and $Z$ have transverse intersection
$Y \cap Z$, which therefore is either empty or a compact submanifold of codimension $i + j$,
which is oriented in a canonical way.
The images of the fundamental classes of $Y$, $Z$, and $Y\cap Z$ under the inclusions into $X$
define homology classes that we denote (with a slight abuse of notation) by
$[Y] \in H_{d-i} X$, $[Z] \in H_{d-j} X$, $[Y\cap Z] \in H_{d-i-j} X$.
Then their Poincar\'e duals 
$[Y]^*\in H^i X$, $[Z]^* \in H^j X$, and $[Y \cap Z]^* \in H^{i+j} X$
are related by: 
$$
[Y]^* \cupro [Z]^* = [Y \cap Z]^* \, .
$$
That is, \emph{cup product is Poincar\'{e} dual to intersection.}

Now consider the case where $X$ is a projective nonsingular (i.e., smooth) complex variety,
and $Y$ and $Z$ are irreducible  subvarieties of $X$.
Obviously, the fundamental class $[X]$ makes sense, because $X$ is a compact manifold
with a canonical orientation induced from the complex structure.
A deeper fact (see \cite[Appendix~B]{Fulton})
is that fundamental classes $[Y]$ and $[Z]$ can also be canonically associated to 
the (possibly singular) subvarieties $Y$ and $Z$,
and the Poincar\'{e} duality between cup product and intersection 
works in this situation.
More precisely,
suppose that $Y$ and $Z$ are transverse in the algebraic sense:
$Y \cap Z$ is a union of subvarieties $W_1$, \dots, $W_\ell$
whose codimensions are the sum of the codimensions of $Y$ and $Z$,
and for each $i=1,\dots,\ell$, the tangent spaces
$T_w Y$ and $T_w Z$ are transverse 
for all $w$ in a Zariski-open subset of $W_i$.
Then each $W_i$ has its canonical fundamental class, 
and the following duality formula holds:
$$
[Y]^* \cupro [Z]^* = [W_1]^* + \cdots + [W_\ell]^* \, .
$$

\medskip

In our application of this machinery, $X$ will be the grassmannian $G_k(\C^n)$.
In this case:
\begin{itemize}
\item The fundamental classes of the Schubert varieties $[\bar\Omega(\lambda, F_\bullet)]$
do not depend on the flag $F_\bullet$.
\item Let $\sigma_\lambda$ denote the Poincar\'e dual of $[\bar\Omega(\lambda, F_\bullet)]$.
Then $H^{2r} G_k(\C^n)$ is a free abelian group and the elements $\sigma_\lambda$
with $|\lambda| = r$ form a set of generators.
(The cohomology groups of odd codimension are zero.)
\item The cup product on cohomology agrees with the ``cup'' product of Young diagrams 
explained in the previous section.
\end{itemize}

\section{End of the proof}\label{s.end}

We are now able to give to prove \cref{t.schubert}.\footnote{Probably the result could also be 
proved using the Chow ring, but we feel more comfortable with cohomology.}

\begin{proof}[Proof of \cref{t.schubert}]
Let $1 \le e \le k < n$.
Let $E \subset \C^n$ be a subspace of dimension $e$,
and consider the set $S_k(E)$ defined by \eqref{e.special schubert}.
Recall from \cref{ex.special schubert} that 
this is the Schubert variety for the Young diagram $\lambda$ given by \eqref{e.special young}.

Now consider a (nonempty) subvariety $Y \subset G_k(\C^n)$
that is disjoint from $S_k(E)$.
We want to give a lower bound for the codimension $c$ of $Y$.
We can of course assume that $Y$ is irreducible.

Let $[Y]^*$ be the dual of fundamental class of $Y$.
This is a nonzero element of $H^{2c} G_k(\C^n)$.
It can be expressed as  
$\sum n_i \sigma_{\mu_i}$,
where $\mu_i$ are Young diagrams with area $|\mu_i|=c$,
and $n_i$ are nonzero integers.
In fact we have $n_i>0$, because of the canonical
orientations induced by complex structure. 

Since the intersection between $S_k(E)$ and $Y$ is empty (and in particular transverse),
Poincar\'{e} duality gives $[S_k(E)]^* \cupro [Y]^* = 0$.
Therefore we have $\sigma_\lambda \cupro \sigma_{\mu_i} = 0$ for each $i$.

By \cref{l.overlap},
if we draw the Young diagram of $\mu_i$
rotated by $180^{\circ}$ and put in the southeast corner
of the $k \times (n-k)$ rectangle,
then it overlaps the Young diagram $\lambda$ pictured in \eqref{e.special young}.
This is only possible if $c \ge k-e+1$;
indeed the Young diagram $\mu$ with least area such that $\lambda \cupro \mu \neq 0$ is
$$
\mu = \big( \underbrace{1,\dots,1}_{k-e+1 \text{ times}}, \underbrace{0,\dots,0}_{e-1 \text{ times}} \big), 
$$
for which the overlapping picture becomes:
\setlength{\unitlength}{.25cm}
\begin{center}
\begin{picture}(12,8)
\thinlines
\put(0,0){\grid(12,8)(1,1)}
\multiput(0,5)(0,1){3}{\multiput(0,0)(1,0){12}{\lightdiagonalpattern}}
\multiput(11,0)(0,1){6}{\brickpattern}
\thicklines
\put(11,5){\grid(1,1)(1,1)}
\end{picture}
\end{center}
This concludes the proof of  \cref{t.schubert}.
\end{proof}

As explained in \cref{ss.reduction}, \cref{t.main} follows.

%
%
%
%
%
%
%
%


\end{document}